\newcommand{\leg}[2]{\genfrac{(}{)}{}{}{#1}{#2}}
\newtheorem{theorem}{Theorem}
\theoremstyle{remark}
\newtheorem{remark}{Remark}
\numberwithin{theorem}{section} \numberwithin{equation}{section}
\begin{document}

\title[Congruences for Broken $k$-Diamond Partitions]
{Congruences for Broken $k$-Diamond Partitions}
    \author{Marie Jameson}

    \address{Department of Mathematics and Computer Science, Emory University,
    Atlanta, Georgia 30322}
\email{mjames7@emory.edu}
\subjclass[2010]{05A15, 05A17, 11P83}

\begin{abstract}
We prove two conjectures of Paule and Radu from their recent paper on broken $k$-diamond partitions.
\end{abstract}
\maketitle
\noindent

\section{Introduction and Statement of Results}
In \cite{AndrewsPaule}, Paule and Andrews constructed a class of directed graphs called broken $k$-diamonds, and they used them to define $\Delta_k(n)$ to be the number of broken $k$-diamond partitions of $n$. They noted that the generating function for $\Delta_k(n)$ is essentially a modular form. More precisely, if $k \geq 1,$ then
\begin{equation}\label{genfun}
\sum_{n=0}^\infty \Delta_k(n)q^n = q^{(k+1)/12}\frac{\eta(2z)\eta((2k+1)z)}{\eta(z)^3\eta((4k+2)z)},
\end{equation}
where $\eta(z)$ is Dedekind's eta function \[\eta(z) = q^{1/24}\prod_{n=1}^\infty\left(1-q^n\right), \qquad \left(q = e^{2\pi i z} \right).\]

One can show various congruences for $\Delta_k(n)$ for $n$ in certain arithmetic progressions.  For example, Xiong \cite{Xiong} proved congruences for $\Delta_3(n)$ and $\Delta_5(n)$ which had been conjectured by Paule and Radu in \cite{PauleRadu}. In particular, he showed that
\begin{equation}\label{conj1}
\prod_{n=1}^\infty(1-q^n)^4(1-q^{2n})^6 \equiv 6\sum_{n=0}^\infty \Delta_3(7n+5)q^n \pmod{7}
\end{equation}

In this note, we prove the remaining two conjectures in \cite{PauleRadu}. First, we use \eqref{conj1} to prove the following statement (which is denoted Conjecture 3.2 in \cite{PauleRadu}).
\begin{theorem}\label{conj2}
For all $n \in \mathbb{N},$ we have that \[\Delta_3(7^3n+82) \equiv \Delta_3(7^3n+229)\equiv \Delta_3(7^3n+278) \equiv \Delta_3(7^3n+327) \equiv 0 \pmod{7}.\]
\end{theorem}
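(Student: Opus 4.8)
The plan is to reduce the four stated congruences to a single statement about the coefficients of $F(q):=\prod_{n\ge1}(1-q^{n})^{4}(1-q^{2n})^{6}$, and then to prove that statement by realising $F$ modulo $7$ inside a small space of modular forms modulo $7$. For the arithmetic reduction, one checks
\[
7^{3}n+82=7(49n+11)+5,\qquad 7^{3}n+229=7(49n+32)+5,
\]
\[
7^{3}n+278=7(49n+39)+5,\qquad 7^{3}n+327=7(49n+46)+5,
\]
and that $11=7\cdot1+4$, $32=7\cdot4+4$, $39=7\cdot5+4$, $46=7\cdot6+4$. Writing $\sum_{n\ge0}a(n)q^{n}:=F(q)$, congruence \eqref{conj1} reads $\Delta_{3}(7m+5)\equiv 6\,a(m)\pmod 7$, so each of the four quantities in Theorem~\ref{conj2} is congruent modulo $7$ to $6\,a(49n+r)$ for the appropriate $r\in\{11,32,39,46\}$. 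As these $r$ are exactly the residues of $7m+4$ modulo $49$ with $m\equiv1,4,5,6\pmod 7$, Theorem~\ref{conj2} is equivalent to the single claim that $g(q):=\sum_{m\ge0}a(7m+4)\,q^{m}$ is supported, modulo $7$, on exponents $m\equiv0,2,3\pmod 7$.

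The next step is to place $g$ inside a small space of modular forms modulo $7$. Since $q^{2/3}F(q)=\eta(z)^{4}\eta(2z)^{6}$, the series $\sum a(n)q^{n}$ is, up to a power of $q$, a holomorphic eta-quotient of weight $5$; multiplying by a suitable product of $\eta(z)$ and $\eta(2z)$ clears the fractional exponent and produces a genuine cusp form on some $\Gamma_{0}(2^{a})$ whose reduction modulo $7$ records the $a(n)$. The passage $\sum a(n)q^{n}\mapsto g(q)=\sum a(7m+4)q^{m}$ is a sieving operator; modulo $7$ it is effected by applying $U_{7}$ to twists of this cusp form by Dirichlet characters of modulus $7$ (this is legitimate since $6$ is invertible modulo $7$). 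Together with the standard fact that a modular form modulo $7$ on $\Gamma_{0}(7^{b}M)$ with $7\nmid M$ is congruent modulo $7$ to one on $\Gamma_{0}(M)$ of bounded weight, this shows that $g$ is congruent modulo $7$ to the reduction of a modular form of small weight on a fixed level prime to $7$.

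The final step is computational: one determines that (small) space explicitly and, matching finitely many Fourier coefficients as Sturm's bound permits, identifies $g$ modulo $7$ with an explicit modular form --- for instance an eta-quotient --- whose $q$-expansion visibly involves only the residues $\{0,2,3\}$ modulo $7$. Equivalently, one can encode the support condition as the single modular identity $\sum_{j=0}^{6}\beta_{j}\,\theta^{j}g\equiv0\pmod 7$, where $\theta=q\,\frac{d}{dq}$ and $\beta_{0},\dots,\beta_{6}\in\mathbb{F}_{7}$ interpolate the indicator function of $\{1,4,5,6\}$, and verify it by Sturm's bound.

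I expect the second step to be the main obstacle. Because of the $q^{2/3}$, the eta-quotient $\eta(z)^{4}\eta(2z)^{6}$ is not itself modular on any $\Gamma_{0}(N)$, so one must first repair it by an auxiliary eta product --- raising the weight and level and introducing extra eta factors into every congruence --- and then track carefully how the sieving and the $U_{7}$ operator interact with that repair while keeping the level prime to $7$. Once $g$ has been pinned down inside a concrete finite-dimensional space modulo $7$, verifying the support condition is only a finite, if somewhat lengthy, computation.
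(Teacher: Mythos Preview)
Your overall architecture---apply $U_7$ to isolate the arithmetic progression, twist by a character modulo $7$, then verify a congruence by Sturm---is exactly the paper's. The arithmetic reduction in your first paragraph is correct and matches the paper's target: in the paper's notation the claim is $b(21n+5)\equiv b(21n+14)\equiv b(21n+17)\equiv b(21n+20)\equiv 0\pmod 7$, which after the change of variable $b(3k+2)=a(7k+4)$ is precisely your support condition on $g$.

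Where you diverge is in handling the fractional exponent, which you flag as ``the main obstacle.'' The paper avoids it entirely by a one-line trick: instead of $\eta(z)^4\eta(2z)^6=q^{2/3}F(q)$, it works with $\eta(3z)^4\eta(6z)^6=q^{2}F(q^3)$, which is already a genuine cusp form in $S_5\!\left(\Gamma_0(72),\leg{-1}{\bullet}\right)$. No auxiliary eta factors, no repair, no tracking of extra twists. After $U_7$ this lands in $S_5\!\left(\Gamma_0(504),\leg{-1}{\bullet}\right)$, and the paper simply twists by the Legendre symbol $\psi=\leg{\cdot}{7}$ and checks $f-f_\psi\equiv 0\pmod 7$ directly by Sturm at level $24696$. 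There is no attempt to strip the $7$ from the level, so your appeal to the ``standard fact'' about reducing level modulo $7$---which in the form you state it would need real justification---is unnecessary.

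Your alternative endgames (find an explicit eta-quotient with the right support, or encode the support condition via $\sum\beta_j\theta^j g$) are plausible but speculative; the paper's route is more direct. In short: your plan would likely work, but the substitution $z\mapsto 3z$ dissolves the obstacle you anticipated and makes the whole argument a single Sturm-bound verification.
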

Now, recall that the weight $k$ Eisenstein series (where $k\geq 4$ is even) are given by \[E_k(z):=1 - \frac{2k}{B_k}\sum_{n=1}^\infty\sigma_{k-1}(n)q^n,\] where $B_k$ is the $k$th Bernoulli number, and $\sigma_{k-1}(n):=\sum_{d|n}d^{k-1}.$ Also define \begin{equation}\sum_{n=0}^\infty c(n)q^n :=E_4(2z)\prod_{n=1}^\infty(1-q^n)^8(1-q^{2n})^2.\end{equation}

The coefficients $c(n)$ are of interest here because they are related to broken $k$-diamond partitions in the following way (as conjectured in \cite{PauleRadu} and proved in \cite{Xiong}):
\begin{equation}\label{conj3}
c(n) \equiv 8\Delta_5(11n+6) \pmod{11}.
\end{equation}
Here we prove the last remaining conjecture of Paule and Radu (which is Conjecture 3.4 of \cite{PauleRadu}). More precisely, we have the following theorem.

\begin{theorem}\label{cor}
For every prime $p\equiv 1 \pmod{4},$ there exists an integer $y(p)$ such that 
\[c\left(pn + \frac{p-1}{2}\right) + p^8c\left(\frac{n-(p-1)/2}{p}\right) = y(p)c(n)\] for all $n \in \mathbb{N}.$
\end{theorem}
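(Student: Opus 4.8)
The plan is to recognize $\sum_{n\ge 0}c(n)q^n$ as (a rescaling of) a normalized Hecke eigenform of weight nine; the asserted identity is then exactly the Hecke recursion at the prime $p$. Starting from $\eta(z)=q^{1/24}\prod_{n\ge 1}(1-q^n)$, one checks the eta-quotient identity $\prod_{n\ge1}(1-q^n)^8(1-q^{2n})^2=q^{-1/2}\eta(z)^8\eta(2z)^2$, so that after $z\mapsto 2z$,
\[
g(z):=E_4(4z)\,\eta(2z)^8\eta(4z)^2=\sum_{n=0}^{\infty}c(n)\,q^{2n+1}
\]
is an honest $q$-series (the half-integral exponents cancel). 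Since $E_4(4z)\in M_4(\Gamma_0(4))$ is holomorphic and nonvanishing at every cusp, while $\eta(2z)^8\eta(4z)^2$ is, by Ligozat's holomorphy and order-at-cusps criteria, a cusp form of weight $5$ on $\Gamma_0(16)$ with the nontrivial character $\chi$ modulo $4$ (the relevant sign $(-1)^5\cdot 2^{12}=-2^{12}$ being $-1$ up to squares), we get $g\in S_9(\Gamma_0(16),\chi)$. From the product expansions $c(0)=1$, so $g$ is normalized: $g=q-8q^3+\cdots$.

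Next I would observe that the claimed identity is equivalent to the single statement $T_p g=y(p)\,g$. Writing $g=\sum_{m\ge1}a(m)q^m$, so that $a(m)=c\!\big(\tfrac{m-1}{2}\big)$ for odd $m$ and $a(m)=0$ for even $m$, the Hecke operator $T_p$ on $S_9(\Gamma_0(16),\chi)$ sends $a(m)\mapsto a(pm)+\chi(p)\,p^{8}\,a(m/p)$, with $a(m/p)=0$ unless $p\mid m$. For $p\equiv1\pmod4$ one has $\chi(p)=1$; substituting $m=2n+1$, and using both that $p(2n+1)=2\big(pn+\tfrac{p-1}{2}\big)+1$ and that $p\mid 2n+1$ if and only if $\tfrac{n-(p-1)/2}{p}\in\mathbb{Z}$, in which case $(2n+1)/p=2\cdot\tfrac{n-(p-1)/2}{p}+1$, turns $T_p g=y(p)\,g$ into precisely
\[
c\!\Big(pn+\tfrac{p-1}{2}\Big)+p^{8}\,c\!\Big(\tfrac{n-(p-1)/2}{p}\Big)=y(p)\,c(n)\qquad(n\in\mathbb{N}),
\]
with the convention that $c$ of a non-integer is $0$; and $y(p)=a(p)=c\!\big(\tfrac{p-1}{2}\big)\in\mathbb{Z}$ because $g$ has integer Fourier coefficients and $a(1)=c(0)=1$. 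The hypothesis $p\equiv1\pmod4$ is exactly the condition $\chi(p)=1$ that makes the middle coefficient equal $+p^{8}$ rather than $\chi(p)p^{8}$.

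It remains to prove that $g$ is a Hecke eigenform. The key structural fact is that $g$ is supported on odd exponents (both $\prod_{n\ge1}(1-q^{2n})^8(1-q^{4n})^2$ and $E_4(4z)$ involve only even exponents), i.e.\ $g\in\ker U_2$; and since $U_2$ commutes with $T_p$ for every odd $p$, the subspace $\ker U_2\subseteq S_9(\Gamma_0(16),\chi)$ is stable under all $T_p$ with $p$ odd. I would then perform a dimension computation showing that $\ker U_2$ is one-dimensional — equivalently, that $g$ is, up to a scalar, the unique cusp form of weight $9$, level $16$ and character $\chi$ with odd-exponent support — and match its first coefficients, up to the Sturm bound, with the CM newform attached to a Hecke character of $\mathbb{Q}(i)$ of infinity type $(8,0)$ ramified only at $1+i$. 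Being alone in a space stable under all odd $T_p$, $g$ is automatically a simultaneous eigenform for those operators, which completes the proof. (The CM description also explains the shape of the hypothesis: for inert primes $p\equiv3\pmod4$ one has $y(p)=0$, and those primes give only the degenerate relation $c\big(pn+\tfrac{p-1}{2}\big)=p^{8}c\big(\tfrac{n-(p-1)/2}{p}\big)$.)

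The main obstacle is this last step: confirming that the $T_p$-stable space containing $g$ is one-dimensional, so that $g$ genuinely is an eigenform and not, say, a sum of Galois-conjugate newforms sharing a trace of Frobenius. This is a finite verification — a dimension-formula computation together with a Sturm-bound coefficient comparison — but it carries the real content of the argument; everything preceding it is bookkeeping with eta quotients and the definition of the Hecke operator.
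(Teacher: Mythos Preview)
Your reduction to the statement $g\mid T_p = y(p)\,g$ is correct and matches the paper, but the final step fails: $g$ is \emph{not} a Hecke eigenform for all odd primes, and the space $\ker U_2\subset S_9(\Gamma_0(16),\chi)$ is not one--dimensional. Concretely, the paper exhibits two linearly independent forms $f_1(z)$ and $f_2(z)=E_4(4z)F(2z)\eta(4z)^6$ in $S_9(\Gamma_0(16),\chi)$, both supported on odd exponents (for $f_2$ the exponents are in fact all $\equiv 3\pmod 4$), so $\dim\ker U_2\ge 2$. The two genuine newforms are $f=f_1+8i\sqrt{3}\,f_2$ and $\overline{f}$, and $g=\bigl(\tfrac12+\tfrac{i}{2\sqrt3}\bigr)f+\bigl(\tfrac12-\tfrac{i}{2\sqrt3}\bigr)\overline{f}$ is a nontrivial combination of them. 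In particular $g$ is not a CM form, and your proposed Sturm-bound match with a CM newform of $\mathbb{Q}(i)$ would not succeed; for instance $T_7$ acts on $f$ and $\overline{f}$ with distinct (non-real) eigenvalues, so $g\mid T_7$ is not a multiple of $g$, contradicting your parenthetical claim that $y(p)=0$ for $p\equiv 3\pmod 4$.

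The paper's actual argument supplies exactly the missing idea: one shows that the Hecke span $\mathbb{T}_g$ is two--dimensional with basis $\{f,\overline{f}\}$, and then observes that for $p\equiv 1\pmod 4$ the $q^p$--coefficient of $f_2$ vanishes (since $f_2$ lives on exponents $\equiv 3\pmod 4$), so the $T_p$--eigenvalues of $f$ and $\overline{f}$ coincide and are real. Hence $T_p$ acts as a scalar on $\langle f,\overline{f}\rangle$ and therefore on $g$. Your framework is sound up to the last step; what is needed is this two--dimensional analysis rather than a one--dimensional one.
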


\begin{remark}
Theorem \ref{cor} follows from a more technical result (see Theorem \ref{thm} which is proved in Section \ref{prf}).
\end{remark}

\begin{remark}
As noted in \cite{PauleRadu}, one can combine \eqref{conj3} with Theorem \ref{cor} to see that for every prime $p\equiv 1 \pmod{4}$ and $n \in \mathbb{N}$ we have
\[\Delta_5\left((11n+6)p - \frac{p-1}{2}\right) + p^8\Delta_5\left(\frac{11n+6}{p} + \frac{p-1}{2p}\right) \equiv y(p)\Delta_5(11n+6)\pmod{11}.\]
\end{remark}

To prove Theorems \ref{conj2} and \ref{cor}, we make use of the theory of modular forms. In particular, we shall make use of the $U$-operator, Hecke operators, the theory of twists, and a theorem of Sturm. These results are described in \cite{Ono}. We shall freely assume standard definitions and notation which may be found there.
\section{Proof of Theorem \ref{conj2}}

First we consider the form $\eta(3z)^4\eta(6z)^6.$ By Theorems 1.64 and 1.65 in \cite{Ono}, we have that $\eta(3z)^4\eta(6z)^6\in S_5\left(\Gamma_0(72), \leg{-1}{\bullet}\right).$ Note from \eqref{conj1} that \[\eta(3z)^4\eta(6z)^6 \equiv 6\sum_{n=0}^\infty \Delta_3(7n+5)q^{3n+2} \pmod{7}.\] It follows that  \[f(z):=\eta(3z)^4\eta(6z)^6 \mid U_7 \equiv 6\sum_{n=0}^\infty \Delta_3(7^2n+33)q^{3n+2} \pmod{7}.\] Here, $U_d$ denotes Atkin's $U$-operator, which is defined by \[\sum_{n=0}^\infty a(n)q^n \mid U_d = \sum_{n=0}^\infty a(dn)q^n\] for $d$ a positive integer. By the theory of the $U$-operator (see Proposition 2.22 in \cite{Ono}), it follows that $f(z) \in S_5\left(\Gamma_0(504), \leg{-1}{\bullet}\right).$ Now if we define $b(n)$ by \[f(z) := \sum_{n=0}^\infty b(n)q^n,\] then our goal is to show that \[b(21n + 5) \equiv b(21n+14)\equiv b(21n + 17) \equiv b(21n+20) \equiv 0 \pmod{7}.\]

In order to prove the desired congruence, consider the Dirichlet character $\psi$ defined by $\psi(d) :=\leg{d}{7},$ we may consider the $\psi$-twist of $f$, which is given by \[f_\psi(z) := \sum_{n=0}^\infty \psi(n)b(n)q^n.\] By Proposition 2.8 of \cite{Ono}, we have that $f_\psi(z) \in S_5\left(\Gamma_0(24696), \leg{-1}{\bullet}\right).$

Then consider \[f(z) - f_\psi(z) = \sum_{n=0}^\infty \left(1-\leg{n}{7}\right)b(n)q^n \in S_5\left(\Gamma_0(24696), \leg{-1}{\bullet}\right).\] In fact, $f(z) - f_\psi(z) \equiv 0 \pmod{7}.$ This follows from a theorem of Sturm (see Theorem 2.58 in \cite{Ono}), which states that $f(z) - f_\psi(z) \equiv 0 \pmod{7}$ if its first $23520$ coefficients are 0 (mod 7) (which can be checked using a computer). Thus we have that \[\left(1-\leg{n}{7}\right)b(n) \equiv 0 \pmod{7}\] for all $n$, and thus \[b(21n + 5)\equiv b(21n+14) \equiv b(21n + 17) \equiv b(21n+20) \equiv 0 \pmod{7}\] for all $n \in \mathbb{N},$ as desired.

\section{Proof of Theorem \ref{cor}} \label{prf}

\subsection{Preliminaries} \label{prelim}
Let us first recall the Hecke operators and their properties.  If $f(z) = \sum_{n=0}^\infty a(n)q^n \in M_k(\Gamma_0(N),\chi)$ and $p$ is prime, the Hecke operator $T_{p,k,\chi}$ (or simply $T_p$ if the weight and character are known from context) is defined by \[f(z) \mid T_{p} := \sum_{n=0}^\infty \left( a(pn) + \chi(p)p^{k-1}a(n/p)\right)q^n,\] where we set $a(n/p)=0$ if $p \nmid n.$ It is important to note that $f(z) \mid T_{p} \in M_k(\Gamma_0(N),\chi).$

In order to prove the final statement of Theorem \ref{cor}, define \[g(z) =\sum_{n=0}^\infty b(n)q^n:= E_4(4z)\eta(2z)^8\eta(4z)^2 \in S_9\left(\Gamma_0(16),\leg{-4}{\bullet}\right)\] and note that $c(n) = b(2n+1).$ Thus we wish to show that for every prime $p\equiv 1 \pmod{4}$ there exists an integer $y(p)$ such that 
\[b\left(p(2n+1)\right) + p^8b\left(\frac{2n+1}{p}\right) = y(p)b(2n+1)\]
for all $n \in \mathbb{N}.$ By summing (and noting that $b(n)=0$ when $n$ is even) we see that this is equivalent to the statement that \[g(z) \mid T_{p} = y(p)g(z).\] That is, we need only show that $g(z)$ is an eigenform of the Hecke operator $T_{p}$ for all $p\equiv 1 \pmod{4}.$

To see this, we let $F$ be the weight 2 Eisenstein series given by \[F(z) := \frac{\eta(4z)^8}{\eta(2z)^4}= \sum_{n=0}^\infty \sigma_1(2n+1)q^{2n+1} \in M_2(\Gamma_0(4)),\] let $\theta_0(z)$ be the theta-function given by \[\theta_0(z) := \sum_{n=-\infty}^\infty q^{n^2},\] and let $h(z)$ be the normalized cusp form \[h(z) := \eta(4z)^6 = \sum_{n=1}^\infty a(n)q^n = q-6q^5 + 9q^9 + \cdots \in S_3\left(\Gamma_0(16),\leg{-4}{\bullet}\right).\] Then $h(z)$ is a modular form with complex multiplication (see Section 1.2.2 of \cite{Ono}), and for prime $p$ we have
\[a(p) = \begin{cases} 2x^2-2y^2 & p=x^2+y^2 \text{ with }x,y \in \mathbb{Z} \text{ and } x \text{ odd}\\ 0 & p \equiv 2,3 \pmod{4}.\end{cases}\]
Then we may define $f_1, f_2, f \in S_9\left(\Gamma_0(16),\leg{-4}{\bullet}\right)$ by 
\begin{align*}
f_1(z)  &= \sum_{n=0}^\infty d_1(n)q^n := E_4(4z)F(z) \left[4\theta_0^6(4z) - \theta_0^6(2z) + 4\theta_0^4(2z)\theta_0^2(4z) - 6\theta_0^2(2z)\theta_0^4(4z) \right] \\
f_2(z) &= \sum_{n=0}^\infty d_2(n)q^n := E_4(4z)F(2z)h(z)\\
f(z) &= \sum_{n=0}^\infty d(n)q^n := f_1(z) + 8i\sqrt{3}f_2(z).
\end{align*}
We prove the following theorem involving these forms.

\begin{theorem} \label{thm}
The forms $f(z)$ and $\overline{f(z)}$ are eigenforms of the Hecke operator $T_p$ for all primes $p$.  Furthermore we have that \[\mathbb{T}_g = \langle f, \overline{f} \rangle,\] where $\mathbb{T}_g$ is the subspace of $S_9\left(\Gamma_0(16),\leg{-4}{\bullet}\right)$ spanned by $g$ together with $g\mid T_p$ for all primes $p$.
\end{theorem}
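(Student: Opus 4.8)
The plan is to prove Theorem \ref{thm} in three stages: first confirm that the displayed forms lie in the stated space, then show $f$ and $\overline{f}$ are Hecke eigenforms, and finally locate $g$ inside $\langle f,\overline{f}\rangle$. For the first stage I would check $f_1,f_2\in S_9\left(\Gamma_0(16),\leg{-4}{\bullet}\right)$, which then gives the same for $f$ and $\overline{f}$. This is bookkeeping with standard facts: $E_4(4z)\in M_4(\Gamma_0(16))$; $F(z)\in M_2(\Gamma_0(4))$ and $F(2z)\in M_2(\Gamma_0(8))$, both inside $M_2(\Gamma_0(16))$; $\theta_0(z)^2\in M_1\left(\Gamma_0(4),\leg{-4}{\bullet}\right)$, so every monomial $\theta_0(2z)^a\theta_0(4z)^b$ with $a+b=6$ lies in $M_3\left(\Gamma_0(16),\leg{-4}{\bullet}\right)$ (using that $\leg{-4}{\bullet}$ is quadratic); and $h(z)=\eta(4z)^6\in S_3\left(\Gamma_0(16),\leg{-4}{\bullet}\right)$ by Theorems 1.64 and 1.65 of \cite{Ono}. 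Multiplying, $f_1,f_2\in M_9\left(\Gamma_0(16),\leg{-4}{\bullet}\right)$; cuspidality of $f_2$ is inherited from $h$, and for $f_1$ one checks directly that the order of vanishing is positive at each cusp. Since $\leg{-4}{\bullet}$ is real and $f_1,f_2$ have integral Fourier coefficients, $\overline{f}=f_1-8i\sqrt{3}f_2$ is the image of $f$ under $\sqrt{-3}\mapsto-\sqrt{-3}$ on coefficients, hence lies in the same space.

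The second stage is the heart of the argument: showing $f$ is a $T_p$-eigenform for every prime $p$. The entire construction is organized around the recalled fact that $h$ has complex multiplication by $\mathbb{Q}(i)$, with the displayed formula for $a(p)$, and around the observation that $f$ has Fourier coefficients in $\mathbb{Q}(\sqrt{-3})$. I would identify the two-dimensional space $W:=\langle f_1,f_2\rangle=\langle f,\overline{f}\rangle$ with the isotypic component of $S_9\left(\Gamma_0(16),\leg{-4}{\bullet}\right)$ attached to a single Hecke eigensystem whose eigenvalues generate $\mathbb{Q}(\sqrt{-3})$; such a component is Hecke-stable and two-dimensional, spanned by an eigenform and its Galois conjugate, and these are its only eigenvectors up to scalars. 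One then verifies that $f$ lies in $W$ and is one of these eigenvectors by comparing Fourier coefficients up to the relevant Sturm bound — a finite check, since two forms of weight $9$ on $\Gamma_0(16)$ that agree far enough are equal. (Equivalently, one can try to prove $f\mid T_p=\lambda_p f$ by hand, propagating the multiplicative CM eigenvalues of $h$ through the action of $T_p$ on $E_4(4z)$, $F$, and the theta monomials; I expect $\lambda_p$ to come out as an explicit expression in $x,y$ when $p=x^2+y^2$, and to be real exactly when $p\equiv1\pmod{4}$.) Either way, conjugating coefficients in $f\mid T_p=\lambda_p f$ gives $\overline{f}\mid T_p=\overline{\lambda_p}\,\overline{f}$.

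For the third stage I would express $g$ in terms of $f_1,f_2$, equivalently $f,\overline{f}$: this is finite linear algebra, since $g,f_1,f_2$ all lie in the fixed finite-dimensional space $S_9\left(\Gamma_0(16),\leg{-4}{\bullet}\right)$, so matching coefficients up to the Sturm bound suffices, and the reality of $g$ together with $\overline{f}$ being the coefficient-conjugate of $f$ forces $g=\alpha f+\overline{\alpha}\,\overline{f}$ for some nonzero $\alpha\in\mathbb{C}$. Hence $g\mid T_p=\alpha\lambda_p f+\overline{\alpha}\,\overline{\lambda_p}\,\overline{f}\in\langle f,\overline{f}\rangle$ for all $p$, so $\mathbb{T}_g\subseteq\langle f,\overline{f}\rangle$; and there is a prime $p$ with $\lambda_p\notin\mathbb{R}$ (necessarily with $p\not\equiv1\pmod{4}$), since otherwise the eigensystems of $f$ and $\overline{f}$ would agree and $f$ would be a scalar multiple of its conjugate, contradicting that it has non-real coefficients. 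For such a $p$, $g$ and $g\mid T_p$ are linearly independent, so $\dim\mathbb{T}_g=2$ and $\mathbb{T}_g=\langle f,\overline{f}\rangle$. Theorem \ref{cor} then follows as indicated in \S\ref{prf}: when $p\equiv1\pmod{4}$ we have $\lambda_p=\overline{\lambda_p}$, so $g\mid T_p=\lambda_p g$ and one may take $y(p)=\lambda_p$, which is a rational algebraic integer, hence an ordinary integer.

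The step I expect to be the main obstacle is the second stage — proving that the precise combination $f=f_1+8i\sqrt{3}f_2$, with the coefficients $4,-1,4,-6$ in the theta part and the scalar $8i\sqrt{3}$, really is the Hecke eigenform rather than an arbitrary element of the space. The design of $f$ is exactly what pins it to the two-dimensional Hecke-stable subspace, and justifying that — whether via the isotypic decomposition together with Sturm's bound, or via a direct Hecke computation on the complex-multiplication building blocks — is where the genuine work lies; the first and third stages are routine modular-forms bookkeeping and finite coefficient verification.
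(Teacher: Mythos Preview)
Your proposal is correct and follows the same overall strategy as the paper. For the second stage --- which you rightly flag as the crux --- the paper's argument is more concrete than an appeal to the full isotypic decomposition: it simply verifies that $f$ and $\overline{f}$ are both eigenforms of $T_5$ with eigenvalue $258$, computes that $\ker(T_5-258)$ is two-dimensional in $S_9\!\left(\Gamma_0(16),\leg{-4}{\bullet}\right)$, and then checks that $f$ and $\overline{f}$ have \emph{distinct} $T_7$-eigenvalues; commutativity of the Hecke algebra then forces each of $f,\overline{f}$ to be a simultaneous eigenform of every $T_p$. For the third stage the paper writes down the explicit combination $g=\bigl(\tfrac{1}{2}+\tfrac{i}{2\sqrt{3}}\bigr)f+\bigl(\tfrac{1}{2}-\tfrac{i}{2\sqrt{3}}\bigr)\overline{f}$, and the distinct $T_7$-eigenvalues already supply your ``prime with $\lambda_p\notin\mathbb{R}$'' to force $\dim\mathbb{T}_g=2$.
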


\begin{proof}
First note that $f$ and $\overline{f}$ are eigenforms of the Hecke operator $T_p$ for all primes $p$.  To see this, note that there is a basis of Hecke eigenforms of the space $S_9\left(\Gamma_0(16),\leg{-4}{\bullet}\right)$. Also, both $f$ and $\overline{f}$ are eigenforms of $T_5$ with eigenvalue 258, one can compute that this eigenspace \[\mathrm{ker}\left(T_5 - 258\right)\] is 2-dimensional.  Finally, both $f$ and $\overline{f}$ are eigenforms of the Hecke operator $T_7,$ and they have different eigenvalues.

Now, note that \[g = \left( \frac{1}{2} + \frac{i}{2\sqrt{3}} \right) f +  \left( \frac{1}{2} - \frac{i}{2\sqrt{3}} \right) \overline{f}\] and thus $\mathbb{T}_g$ is a two-dimensional subspace of $\langle f, \overline{f}\rangle.$ Thus $\mathbb{T}_g = \langle f, \overline{f}\rangle,$ as desired.
\end{proof}
\subsection{Proof of Theorem \ref{cor}}
Suppose $p$ is a prime with $p\equiv 1\pmod{4}.$ Then we need only check that $f$ and $\overline{f}$ are eigenforms of $T_p$ with the \emph{same eigenvalue}. Since these eigenvalues are the coefficients of $q^p$ in the expansions of $f$ and $\overline{f}$, we need only show that \[d(p) = \overline{d(p)},\] i.e., $d(p) \in \mathbb{R}$.

Now, note that $d_2(p)=0$ since the coefficients of $f_2$ are only supported on indices that are congruent to 3 mod 4 by the descriptions of $E_4(4z), F(2z),$ and $h(z)$ given above. Thus $d(p) = d_1(p) \in \mathbb{R},$ as desired.
\nocite{*}
\bibliographystyle{plain}
\bibliography{biblio}

\end{document}